\theoremstyle{plain}
\newcommand*{\G}{\ensuremath{\mathbb{G}\text{r}}}
\newcommand*{\OG}{\ensuremath{\omega\text{-}\G}}
\newcommand*{\C}{\ensuremath{\mathbb{C}\mathbb{A}T}}
\newcommand*{\TC}{\ensuremath{\mathbb{T}\text{-}\C}}
\newcommand*{\TG}{\ensuremath{\mathbb{T}\text{-}\mathbb{G}\text{r}}}
\newcommand*{\T}{\ensuremath{\mathbb{T}}}
\title{Notes on $\omega$-graphs, reflexive $\omega$-graphs, their higher transformations, and $\omega$-operads}
\author{Camell Kachour}
\begin{document}
\maketitle
\begin{abstract}
In this note we propose an $\omega$-operadical way to prove the existence of 
the $\omega$-graph of the $\omega$-graphs and the reflexive $\omega$-graph 
of the reflexive $\omega$-graphs.
\end{abstract}

\begin{minipage}{118mm}{\small
{\bf Keywords.} Higher weak omega transformations, Coloured operads, Abstract homotopy theory.\\
{\bf Mathematics Subject Classification (2010).} 03B15, 03C85, 18A05, 18C20, 18D05, 18D50, 18G55, 55U35, 55U40.
}\end{minipage}

%\vspace{1cm}
\tableofcontents
\vspace{1cm}

\section*{Introduction}
\label{intro}

In \cite{camell:coend} we have proposed a unified technology to define the $\omega$-magma of the
$\omega$-magmas and the reflexive $\omega$-magma of the reflexive $\omega$-magmas.
Especially we have conjectured that, up to the contractibility of
some specific $\omega$-operads of coendomorphisms, this technology can be used to define the strict $\omega$-category
of the strict $\omega$-categories, but also the weak $\omega$-category
of the weak $\omega$-categories. In \cite{camell:coend} this technology uses the central notion
of the standard action of the higher transformations\footnote{That we call standard action for short.}. However it is very important to notice that the discovery of this technology of the standard actions\footnote{Which in fact can be generalised as we will see in a future paper in progress},
is completely independent with the notion of contractibility for the higher transformations that we have proposed in \cite{camell:coend}: In particular it is the contiguity of these two ideas which, we conjectured, gives an $\omega$-operadical approach of the weak $\omega$-category of the weak $\omega$-categories.

In this note we just use this technology of the standard action to prove the existence of the $\omega$-graph of the $\omega$-graphs, and the reflexive $\omega$-graph of the reflexive $\omega$-graphs, where in that case no contractibility
notion are involved. In fact this article can be considered as a result of \cite{camell:coend}, and has the goal to bring in light the power of this technology of
the standard action, for these basic and simple higher structures which are respectively $\omega$-graphs and
 reflexive $\omega$-graphs. In addition the author wishes to convice the reader about the relevance of the technology developed in \cite{camell:coend} for higher category theory with $\omega$-operads.

A key ingredient for our purpose occur in the level of the pointed $\T$-graphs (see \cite{camell:coend}): We use a
coglobular complex in $\TG_{p,c}$

\[\xymatrix{G^{0}\ar[rr]<+2pt>^{\delta^{1}_{0}}\ar[rr]<-2pt>_{\kappa^{1}_{0}}
  &&G^{1}\ar[rr]<+2pt>^{\delta^{1}_{2}}\ar[rr]<-2pt>_{\kappa^{2}_{1}}
  &&G^{2}\ar@{.>}[r]<+2pt>^{}\ar@{.>}[r]<-2pt>_{}
  &G^{n-1}\ar[rr]<+2pt>^{\delta^{n}_{n-1}}\ar[rr]<-2pt>_{\kappa^{n}_{n-1}}
  &&G^{n}\ar@{.}[r]<+2pt>\ar@{.}[r]<-2pt>&}\]
which is more basic than the coglobular complex $C^{\bullet}$ in $\TG_{p,c}$

\[\xymatrix{C^{0}\ar[rr]<+2pt>^{\delta^{1}_{0}}\ar[rr]<-2pt>_{\kappa^{1}_{0}}
  &&C^{1}\ar[rr]<+2pt>^{\delta^{1}_{2}}\ar[rr]<-2pt>_{\kappa^{2}_{1}}
  &&C^{2}\ar@{.>}[r]<+2pt>^{}\ar@{.>}[r]<-2pt>_{}
  &C^{n-1}\ar[rr]<+2pt>^{\delta^{n}_{n-1}}\ar[rr]<-2pt>_{\kappa^{n}_{n-1}}
  &&C^{n}\ar@{.}[r]<+2pt>\ar@{.}[r]<-2pt>&}\]
which is used in \cite{camell:coend}, and called the coglobular
     complex for the higher transformations. The coglobular complex $G^{\bullet}$ is build just by removing all cells "$\mu^{n}_{p}$" and "$\nu^{n}_{p}$" from it.

In this article we note $\mathbb{S}et$ the category of sets and $\mathbb{S}ET$ the
category of the sets and large sets.

  \vspace*{1cm}   %\section{}                 \subsection{}   \xymatrix{\ar[r]<>^{}&

 \section{Standard actions of \texorpdfstring{$\omega$}{omega}-operads associated to a coglobular complex in \texorpdfstring{$\TC_c$}{TCc}}
  \label{short_intro}
 In this paragraph we recall briefly the section $2$ of the article \cite{camell:coend}, with the
 slight modification in the level of the combinatoric used in $\TG_{p,c}$ as in the last section. However
 it is necessary that the reader read also the section $1$ of this article
 to understand well this section.

 Consider a category of $\omega$-operads equipped with a specific structure
  or having a certain property that we call "$P$" such that if we call $P\TC_c$ this category, then the forgetful functor $U_{P}$ to $\TG_{p,c}$ has a left adjunction $F_{P}$.
 If we apply $F_{P}$ to the coglobular complex $G^{\bullet}$ (see the \textit{Introduction}) we obtain a coglobular complex in  $P\TC_c$

    \[\xymatrix{B^{0}_{P}\ar[rr]<+2pt>^{\delta^{1}_{0}}\ar[rr]<-2pt>_{\kappa^{1}_{0}}
  &&B^{1}_{P}\ar[rr]<+2pt>^{\delta^{1}_{2}}\ar[rr]<-2pt>_{\kappa^{2}_{1}}
  &&B^{2}_{P}\ar@{.>}[r]<+2pt>^{}\ar@{.>}[r]<-2pt>_{}
  &B^{n-1}_{P}\ar[rr]<+2pt>^{\delta^{n}_{n-1}}\ar[rr]<-2pt>_{\kappa^{n}_{n-1}}
  &&B^{n}_{P}\ar@{.}[r]<+2pt>\ar@{.}[r]<-2pt>&}\]
 which is also a coglobular object $W_{P}$ of $\TC_c$. Thus we obtain the resulting
  standard action\footnote{See paragraphs $1$ and $2$ of \cite{camell:coend} for
    a complete description of this diagram. Here $1$ design the terminal
    $\omega$-graph.} of $\TC_{1}$

   \[\xymatrix{Coend(W_{P})\ar[rr]^{Coend(\mathbb{A}lg(.))}&&
 Coend(A_{P}^{op})\ar[rr]^{Coend(Ob(.))}&&Coend(A_{0,P}^{op}) }\]
 where in particular $Coend(W_{P})$ is the monochromatic $\omega$-operad of coendomorphism associated to this
   coglobular complex. These kind of standard action are similar to those in \cite{camell:coend}, but much more simpler combinatoricaly speaking, because
   based on the more basic diagram $G^{\bullet}$ in $\TG_{p,c}$.

  As in \cite{camell:coend} the main problem is to build a morphism of $\omega$-operads between the monochromatic $\omega$-operad $B^{0}_{_{P}}$ (the "$0$-step"
   of the coglobular object $W_{P}$) and the monochromatic $\omega$-operad $Coend(W_{P})$ (build with the whole coglobular object $W_{P}$).
   If this morphism exist then it shows that $Coend(A_{0}^{op})$ is an algebra of $B^{0}_{P}$, and in that case we say that $B^{0}_{P}$ has the \textit{fractal property}.
   If $B^{0}_{P}$ has this fractal property, it means that $P$-$\omega$-categories, $P$-$\omega$-functors, $P$-$\omega$-natural transformations,
   $P$-$\omega$-modifications, etc. form a $B^{0}_{P}$-algebra.

   The paragraph below are
   devoted to give two examples of such $\omega$-operads with the fractal property. More
   precisely we are going to study the case where $P=Id$
   (i.e we just work with $\TC_c$; compare with the beginning of the section $4$ in \cite{camell:coend})
   and $P=Id_{u}$, i.e it is the property to have \textit{contractible units}
   (for a given $\omega$-operad; compare with the section $3.1$
   and the section $4$ of \cite{camell:coend}). In particular we are going to use the following
   free functors
  \[M: \xymatrix{\TG_{p,c}\ar[r]^{}&\TC_c,}\qquad Id_u: \xymatrix{\TG_{p,c}\ar[r]^{}&Id_u\TC_{c}}\]
   which are both defined in \cite{camell:coend}.

\section{The coglobular complexes of the graphical \texorpdfstring{$\omega$}{omega}-operads}
    \label{The_coglobular_complexes_of_the_graphical_omega_operads}
 The category $\OG$ of the $\omega$-graphs has canonical \textit{higher transformations}. First we are going to describe these higher transformations as presheaves
  on appropriate small categories $\mathbb{G}_{n}$, and then see that they form an $\omega$-graph that we call the $\omega$-graph of the $\omega$-graphs.
  It is the combinatoric description of these small categories $\mathbb{G}_{n}$ which allows to have a straightforward proof of the proposition \ref{proposition-graph},
  which basically says that these higher transformations are algebras for adapted $2$-coloured $\omega$-operads (see below).

 Consider the globe category $\mathbb{G}$

  \[\xymatrix{\bar{0}\ar[rr]<+2pt>^{s^{1}_{0}}\ar[rr]<-2pt>_{t^{1}_{0}}
   &&\bar{1}\ar[rr]<+2pt>^{s^{1}_{2}}\ar[rr]<-2pt>_{t^{2}_{1}}
   &&\bar{2}\ar@{.>}[r]<+2pt>^{}\ar@{.>}[r]<-2pt>_{}
   &\overline{n-1}\ar[rr]<+2pt>^{s^{n}_{n-1}}\ar[rr]<-2pt>_{t^{n}_{n-1}}
   &&\bar{n}\ar@{.}[r]<+2pt>\ar@{.}[r]<-2pt>&}\]
 subject to relations\footnote{Which are describe in the section $1$ of
  \cite{camell:coend}.} on cosources $s^{n+1}_{n}$ and cotargets $t^{n+1}_{n}$. For each each $n\geqslant 1$ we are going to build
 similar categories $\mathbb{G}_{n}$ in order that we will obtain a coglobular complex in $\C$

  \[\xymatrix{\mathbb{G}_{0}\ar[rr]<+2pt>^{\delta^{1}_{0}}\ar[rr]<-2pt>_{\kappa^{1}_{0}}
   &&\mathbb{G}_{1}\ar[rr]<+2pt>^{\delta^{1}_{2}}\ar[rr]<-2pt>_{\kappa^{2}_{1}}
   &&\mathbb{G}_{2}\ar@{.>}[r]<+2pt>^{}\ar@{.>}[r]<-2pt>_{}
   &\mathbb{G}_{n-1}\ar[rr]<+2pt>^{\delta^{n}_{n-1}}\ar[rr]<-2pt>_{\kappa^{n}_{n-1}}
   &&\mathbb{G}_{n}\ar@{.}[r]<+2pt>\ar@{.}[r]<-2pt>&}\]
 where $\mathbb{G}_{0}$ is just the globe category $\mathbb{G}$. Each category $\mathbb{G}_{n}$ is called the $n$-globe category. The
 $1$-globe category $\mathbb{G}_{1}$ is given by the category
 \[\xymatrix{\bar{0}\ar[rr]<+2pt>^{s^{1}_{0}}\ar[rr]<-2pt>_{t^{1}_{0}}
   &&\bar{1}\ar[rr]<+2pt>^{s^{2}_{1}}\ar[rr]<-2pt>_{t^{2}_{1}}
   &&\bar{2}\ar@{.>}[r]<+2pt>^{}\ar@{.>}[r]<-2pt>_{}
   &\overline{n-1}\ar[rr]<+2pt>^{s^{n}_{n-1}}\ar[rr]<-2pt>_{t^{n}_{n-1}}
   &&\bar{n}\ar@{.}[r]<+2pt>\ar@{.}[r]<-2pt>&\\
  \bar{0}'\ar[u]^{\alpha^{0}_{0}}\ar[rr]<+2pt>^{s'^{1}_{0}}\ar[rr]<-2pt>_{t'^{1}_{0}}
   &&\bar{1}'\ar[u]^{\alpha^{1}_{0}}\ar[rr]<+2pt>^{s'^{2}_{1}}\ar[rr]<-2pt>_{t'^{2}_{1}}
   &&\bar{2}'\ar[u]^{\alpha^{2}_{0}}\ar@{.>}[r]<+2pt>^{}\ar@{.>}[r]<-2pt>_{}
   &\overline{n-1}'\ar[u]^{\alpha^{n-1}_{0}}\ar[rr]<+2pt>^{s'^{n}_{n-1}}\ar[rr]<-2pt>_{t'^{n}_{n-1}}
   &&\bar{n}'\ar[u]^{\alpha^{n}_{0}}\ar@{.}[r]<+2pt>\ar@{.}[r]<-2pt>&
   }\]
 such that
 $\alpha^{n+1}_{0}\circ s'^{n+1}_{n}=s^{n+1}_{n}\circ \alpha^{n}_{0}$, $\alpha^{n+1}_{0}\circ t'^{n+1}_{n}=t^{n+1}_{n}\circ \alpha^{n}_{0}$.

 The $2$-globe category $\mathbb{G}_{2}$ is given by the category
 \[\xymatrix{\bar{0}\ar[rr]<+2pt>^{s^{1}_{0}}\ar[rr]<-2pt>_{t^{1}_{0}}
   &&\bar{1}\ar[rr]<+2pt>^{s^{1}_{2}}\ar[rr]<-2pt>_{t^{2}_{1}}
   &&\bar{2}\ar@{.>}[r]<+2pt>^{}\ar@{.>}[r]<-2pt>_{}
   &\overline{n-1}\ar[rr]<+2pt>^{s^{n}_{n-1}}\ar[rr]<-2pt>_{t^{n}_{n-1}}
   &&\bar{n}\ar@{.}[r]<+2pt>\ar@{.}[r]<-2pt>&\\
  \bar{0}'\ar[u]<+2pt>_{\alpha^{0}_{0}}\ar[u]<-2pt>^{\beta^{0}_{0}}\ar[rr]<+2pt>^(.4){s'^{1}_{0}}\ar[rr]<-2pt>_(.4){t'^{1}_{0}}
   &&\bar{1}'\ar[llu]_(.4){}\ar[u]<+2pt>_{\alpha^{1}_{0}}\ar[u]<-2pt>^{\beta^{1}_{0}}\ar[rr]<+2pt>^{s'^{2}_{1}}\ar[rr]<-2pt>_{t'^{2}_{1}}
   &&\bar{2}'\ar[u]<+2pt>_{\alpha^{n-1}_{0}}\ar[u]<-2pt>^{\beta^{n-1}_{0}}\ar@{.>}[r]<+2pt>^{}\ar@{.>}[r]<-2pt>_{}
   &\overline{n-1}'\ar[u]<+2pt>_{\alpha^{n-1}_{0}}\ar[u]<-2pt>^{\beta^{n-1}_{0}}\ar[rr]<+2pt>^{s'^{n}_{n-1}}\ar[rr]<-2pt>_{t'^{n}_{n-1}}
   &&\bar{n}'\ar[u]<+2pt>_{\alpha^{n}_{0}}\ar[u]<-2pt>^{\beta^{n}_{0}}\ar@{.}[r]<+2pt>\ar@{.}[r]<-2pt>&
   }\]
   where in particular we have an arrow $\xymatrix{\xi_{1}:\bar{1}'\ar[r]&\bar{0}}$. Arrows $s^{n+1}_{n}$, $t^{n+1}_{n}$, $\alpha^{n}_{0}$, $\beta^{n}_{0}$, and
 $\xi_{1}$ satisfy the following relations

 \begin{itemize}
    \item $\alpha^{n+1}_{0}\circ s'^{n+1}_{n}=s^{n+1}_{n}\circ \alpha^{n}_{0}$, $\alpha^{n+1}_{0}\circ t'^{n+1}_{n}=t^{n+1}_{n}\circ \alpha^{n}_{0}$,\\
    \item  $\beta^{n+1}_{0}\circ s'^{n+1}_{n}=s^{n+1}_{n}\circ \beta^{n}_{0}$, $\beta^{n+1}_{0}\circ t'^{n+1}_{n}=t^{n+1}_{n}\circ \beta^{n}_{0}$,\\
    \item $\xi_{1}\circ s'^{1}_{0}=\alpha^{0}_{0}$ and $\xi_{1}\circ t'^{1}_{0}=\beta^{0}_{0}$.
 \end{itemize}
 More generally the $n$-globe category $\mathbb{G}_{n}$ is given by the category

 \[\xymatrix{\bar{0}\ar[rr]<+2pt>^(.7){s^{1}_{0}}\ar[rr]<-2pt>_(.7){t^{1}_{0}}
   &&\bar{1}\ar[rr]<+2pt>^{s^{1}_{2}}\ar[rr]<-2pt>_{t^{2}_{1}}
   &&\bar{2}\ar@{.>}[r]<+2pt>^{}\ar@{.>}[r]<-2pt>_{}
   &\overline{n-1}\ar[rr]<+2pt>^{s^{n}_{n-1}}\ar[rr]<-2pt>_{t^{n}_{n-1}}
   &&\bar{n}\ar@{.}[r]<+2pt>\ar@{.}[r]<-2pt>&\\\\
  \bar{0}'\ar[uu]<+2pt>_{\alpha^{0}_{0}}\ar[uu]<-2pt>^{\beta^{0}_{0}}\ar[rr]<+2pt>^(.3){s'^{1}_{0}}\ar[rr]<-2pt>_(.3){t'^{1}_{0}}
   &&\bar{1}'\ar[lluu]<+2pt>\ar[lluu]<-2pt>\ar[uu]<+2pt>_(.3){\alpha^{1}_{0}}\ar[uu]<-2pt>^(.3){\beta^{1}_{0}}\ar[rr]<+2pt>^(.3){s'^{2}_{1}}\ar[rr]<-2pt>_(.3){t'^{2}_{1}}
   &&\bar{2}'\ar[lllluu]<+2pt>\ar[lllluu]<-2pt>\ar[uu]<+2pt>_(.7){\alpha^{2}_{0}}\ar[uu]<-2pt>^(.7){\beta^{2}_{0}}\ar@{.>}[r]<+2pt>^{}\ar@{.>}[r]<-2pt>_{}
   &\overline{n-1}'\ar[llllluu]\ar[uu]<+2pt>_{\alpha^{n-1}_{0}}\ar[uu]<-2pt>^{\beta^{n-1}_{0}}
   \ar[rr]<+2pt>^(.3){s'^{n}_{n-1}}\ar[rr]<-2pt>_(.3){t'^{n}_{n-1}}
   &&\bar{n}'\ar[uu]<+2pt>_{\alpha^{n}_{0}}\ar[uu]<-2pt>^{\beta^{n}_{0}}\ar@{.}[r]<+2pt>\ar@{.}[r]<-2pt>&
   }\]
 where in particular we have an arrow
 $\xymatrix{\xi_{n-1}:\overline{n-1}'\ar[r]&\bar{0}}$, and also for each
 $1\leqslant p\leqslant n-2$, we have arrows
 $\xymatrix{\bar{p}'\ar[r]<+2pt>^{\alpha_{p}}\ar[r]<-2pt>_{\beta_{p}} &\bar{0}}$.
 Arrows $s^{n+1}_{n}$, $t^{n+1}_{n}$, $\alpha^{n}_{0}$, $\beta^{n}_{0}$,
 $\alpha_{p}$, $\beta_{p}$, and $\xi_{n-1}$ satisfy the following relations

 \begin{itemize}
    \item $\alpha^{n+1}_{0}\circ s'^{n+1}_{n}=s^{n+1}_{n}\circ \alpha^{n}_{0}$, $\alpha^{n+1}_{0}\circ t'^{n+1}_{n}=t^{n+1}_{n}\circ \alpha^{n}_{0}$,\\
    \item $\beta^{n+1}_{0}\circ s'^{n+1}_{n}=s^{n+1}_{n}\circ \beta^{n}_{0}$, $\beta^{n+1}_{0}\circ t'^{n+1}_{n}=t^{n+1}_{n}\circ \beta^{n}_{0}$.\\
    \item $\alpha_{p}\circ s'^{p}_{p-1}=\beta_{p}\circ s'^{p}_{p-1}=\alpha_{p-1}$ and
  $\alpha_{p}\circ t'^{p}_{p-1}=\beta_{p}\circ t'^{p}_{p-1}=\alpha_{p-1}$, and we put $\alpha_{0}:=\alpha^{0}_{0}$
  and $\beta_{0}:=\beta^{0}_{0}$,\\
    \item $\xi_{n-1}\circ s'^{n-1}_{n-2}=\alpha_{n-2}$ and
  $\xi_{n-1}\circ t'^{n-1}_{n-2}=\beta_{n-2}$.
 \end{itemize}
   The cosources and cotargets functors
  $\xymatrix{\mathbb{G}_{0}\ar[r]<+2pt>^{\delta^{1}_{0}}\ar[r]<-2pt>_{\kappa^{1}_{0}}
   &\mathbb{G}_{1}}$ are such that $\delta^{1}_{0}$ sends $\mathbb{G}_{0}$ to $\mathbb{G}_{0}$, and $\kappa^{1}_{0}$ sends
   $\mathbb{G}_{0}$ to $\mathbb{G}'_{0}$. The cosources and cotargets functors
  $\xymatrix{\mathbb{G}_{1}\ar[r]<+2pt>^{\delta^{2}_{1}}\ar[r]<-2pt>_{\kappa^{2}_{1}}
   &\mathbb{G}_{2}}$ send $\mathbb{G}_{0}$ to $\mathbb{G}_{0}$, and $\mathbb{G}'_{0}$ to $\mathbb{G}'_{0}$.
   Also $\delta^{2}_{1}$ sends the symbols $\alpha^{n}_{0}$ to the symbols $\alpha^{n}_{0}$, and
  $\kappa^{2}_{1}$ sends the symbols $\alpha^{n}_{0}$ to the symbols $\beta^{n}_{0}$.

   Now consider the case $n\geqslant 3$. The cosources and cotargets functors
      \[\xymatrix{\mathbb{G}_{n-1}\ar[rr]<+2pt>^{\delta^{n}_{n-1}}\ar[rr]<-2pt>_{\kappa^{n}_{n-1}}
   &&\mathbb{G}_{n}}\]
 are build as follows : First we remove the cell $\xi_{n-1}$ and the cell $\beta_{n-2}$ from $\mathbb{G}_{n}$, and we obtain
   the category $\mathbb{G}^{-}_{n-1}$. Clearly we have an isomorphism of categories
    $\mathbb{G}^{-}_{n-1}\simeq \mathbb{G}_{n-1}$ (which sends $\alpha_{n-2}$ to $\xi_{n-2}$), and also the embedding
    $\xymatrix{\mathbb{G}^{-}_{n-1}\ar@{^{(}->}[r]^(.6){\delta'^{n-1}_{n}}&\mathbb{G}_{n}}$. The composition of this
    embedding with the last isomorphism gives $\xymatrix{\mathbb{G}_{n-1}\ar[r]^{\delta^{n}_{n-1}}&\mathbb{G}_{n}}$. The cotarget
    functor $\kappa^{n}_{n-1}$ is built similarly : First we remove the cell $\xi_{n-1}$ and the cell $\alpha_{n-2}$ from $\mathbb{G}_{n}$, and we obtain
   the category $\mathbb{G}^{+}_{n-1}$. Clearly we have an isomorphism of categories
    $\mathbb{G}^{+}_{n-1}\simeq \mathbb{G}_{n-1}$ (which sends $\beta_{n-2}$ to $\xi_{n-2}$), and also the embedding
    $\xymatrix{\mathbb{G}^{+}_{n-1}\ar@{^{(}->}[r]^(.6){\kappa'^{n}_{n-1}}&\mathbb{G}_{n}}$. The composition of this
    embedding with the last isomorphism gives $\xymatrix{\mathbb{G}_{n-1}\ar[r]^{\kappa^{n}_{n-1}}&\mathbb{G}_{n}}$.
    It is easy to see that these functors $\delta^{n}_{n-1}$ and $\kappa^{n}_{n-1}$ verify the cosourse cotarget conditions
    as for the globe category $\mathbb{G}_{0}$ above. When we applied the contravariant functor $[-;\mathbb{S}et]$
    to the following coglobular complex in $\C$

    \[\xymatrix{\mathbb{G}^{op}_{0}\ar[rr]<+2pt>^{\delta^{1}_{0}}\ar[rr]<-2pt>_{\kappa^{1}_{0}}
   &&\mathbb{G}^{op}_{1}\ar[rr]<+2pt>^{\delta^{1}_{2}}\ar[rr]<-2pt>_{\kappa^{2}_{1}}
   &&\mathbb{G}^{op}_{2}\ar@{.>}[r]<+2pt>^{}\ar@{.>}[r]<-2pt>_{}
   &\mathbb{G}^{op}_{n-1}\ar[rr]<+2pt>^{\delta^{n}_{n-1}}\ar[rr]<-2pt>_{\kappa^{n}_{n-1}}
   &&\mathbb{G}^{op}_{n}\ar@{.}[r]<+2pt>\ar@{.}[r]<-2pt>&}\]

 it is easy to see that we obtain the $\omega$-graph of the $\omega$-graphs\footnote{For each
  $n\in\mathbb{N}$,
  $[\mathbb{G}^{op}_{n};\mathbb{S}et](0)$ means the set of objects of the presheaf category
  $[\mathbb{G}^{op}_{n};\mathbb{S}et]$.}

   \[\xymatrix{\ar@{.>}[r]<+2pt>^{}\ar@{.>}[r]<-2pt>_{}
   &[\mathbb{G}^{op}_{n};\mathbb{S}et](0)\ar[r]<+2pt>^{\sigma^{n}_{n-1}}\ar[r]<-2pt>_{\beta^{n}_{n-1}}
   &[\mathbb{G}^{op}_{n-1};\mathbb{S}et](0)\ar@{.>}[r]<+2pt>^{}\ar@{.>}[r]<-2pt>_{}
   &[\mathbb{G}^{op}_{1};\mathbb{S}et](0)\ar[r]<+2pt>^{\sigma^{1}_{0}}\ar[r]<-2pt>_{\beta^{1}_{0}}
   &[\mathbb{G}^{op}_{0};\mathbb{S}et](0) }\]
 An object of the category of presheaves $[\mathbb{G}^{op}_{n};\mathbb{S}et]$ is called an $(n,\omega)$-graphs
 \footnote{Do not confuse the $(n,\omega)$-graphs with the $(\infty,n)$-graphs that we have defined in \cite{camell:groupoids}, which are completely different object. In
 \cite{camell:groupoids}, $(\infty,n)$-graphs are an important kind of $\omega$-graphs, which play a central role to define an algebraic approach of the
 weak $(\infty,n)$-categories.}. For instance, if $n\geqslant 3$, the source functor $\sigma^{n}_{n-1}$ is described as follow :
 If $\xymatrix{X:\mathbb{G}^{op}_{n}\ar[r]&\mathbb{S}et}$ is an
  $(n,\omega)$-graph, thus $\xymatrix{X(\xi_{n-1}):X(\bar{0})\ar[r]&X(\overline{n-1}')}$ is its underlying $(n-1)$-transformation, and
   $\xymatrix{\sigma^{n}_{n-1}(X)(\xi_{n-2}):X(\bar{0})\ar[r]&X(\overline{n-2}')}$ is the underlying $(n-2)$-transformation of $\sigma^{n}_{n-1}(X)$, and
   is defined by : $\sigma^{n}_{n-1}(X)(\xi_{n-2})=X(s'^{n}_{n-1})\circ X(\xi_{n-1})$. The target functors $\beta^{n}_{n-1}$ can be also describe easily.

    Now lets come back to the coglobular complex $G^{\bullet}$ in $\TG_{p,c}$ build in the \textit{Introduction}.
 If we apply to it the free functor $M: \xymatrix{\TG_{p,c}\ar[r]^{}&\TC_{c}}$ (see \ref{short_intro})
    we obtain a  coglobular complex in  $\TC_c$

      \[\xymatrix{B^{0}_{Id}\ar[rr]<+2pt>^{\delta^{1}_{0}}\ar[rr]<-2pt>_{\kappa^{1}_{0}}
   &&B^{1}_{Id}\ar[rr]<+2pt>^{\delta^{1}_{2}}\ar[rr]<-2pt>_{\kappa^{2}_{1}}
   &&B^{2}_{Id}\ar@{.>}[r]<+2pt>^{}\ar@{.>}[r]<-2pt>_{}
   &B^{n-1}_{Id}\ar[rr]<+2pt>^{\delta^{n}_{n-1}}\ar[rr]<-2pt>_{\kappa^{n}_{n-1}}
   &&B^{n}_{Id}\ar@{.}[r]<+2pt>\ar@{.}[r]<-2pt>&}\]
 which produces the following globular complex in $\mathbb{C}AT$

     \[\xymatrix{\ar@{.>}[r]<+2pt>^{}\ar@{.>}[r]<-2pt>_{}
   &\underline{B}_{Id}^{n}-\mathbb{A}lg\ar[r]<+2pt>^{\sigma^{n}_{n-1}}\ar[r]<-2pt>_{\beta^{n}_{n-1}}
   &\underline{B}_{Id}^{n-1}-\mathbb{A}lg\ar@{.>}[r]<+2pt>^{}\ar@{.>}[r]<-2pt>_{}
   &\underline{B}_{Id}^{1}-\mathbb{A}lg\ar[r]<+2pt>^{\sigma^{1}_{0}}\ar[r]<-2pt>_{\beta^{1}_{0}}
   &\underline{B}_{Id}^{0}-\mathbb{A}lg }\]
 and we have the easy proposition
 \begin{proposition}
 \label{proposition-graph}
   The category $\underline{B}_{Id}^{0}$-$\mathbb{A}lg$ is the category $[\mathbb{G}^{op}_{0};\mathbb{S}et]$ of the $\omega$-graphs,
    $\underline{B}_{Id}^{1}$-$\mathbb{A}lg$ is the category $[\mathbb{G}^{op}_{1};\mathbb{S}et]$ of the $(1,\omega)$-graphs, and for
    each integer $n\geqslant 2$,
    $\underline{B}_{Id}^{n}$-$\mathbb{A}lg$ is the category $[\mathbb{G}^{op}_{n};\mathbb{S}et]$ of the $(n,\omega)$-graphs.
 \end{proposition}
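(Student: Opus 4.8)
The plan is to exploit the free--forgetful adjunction $M\dashv U$ between $\TG_{p,c}$ and $\TC_c$ recalled in \cref{short_intro} (the case $P=Id$). By construction $B^{n}_{Id}=M(G^{n})$ is the free $\omega$-operad on the pointed $\T$-graph $G^{n}$, so by the standard description of algebras over a free $\omega$-operad (as in \cite{camell:coend}), the category $\underline{B}_{Id}^{n}$-$\mathbb{A}lg$ is isomorphic to the category of $G^{n}$-algebras: globular sets carrying one operation for each generating cell of $G^{n}$, of the prescribed arity, subject to no relations. It therefore suffices to identify this latter category with the presheaf category $[\mathbb{G}^{op}_{n};\mathbb{S}et]$.

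I would first dispose of the base cases. For $n=0$ the graph $G^{0}$ carries only the cells encoding the globular structure, so a $G^{0}$-algebra is simply a globular set; since $\mathbb{G}_{0}=\mathbb{G}$, this is exactly the category $[\mathbb{G}^{op}_{0};\mathbb{S}et]$ of $\omega$-graphs. For $n=1$ the additional generating cells produce a single family of unary operations $\alpha^{p}_{0}$ relating two copies of the globular data, which is precisely what a presheaf on $\mathbb{G}^{op}_{1}$ records, namely a globular map between two globular sets.

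For the general case $n\geqslant 2$ the decisive observation is that $G^{\bullet}$ was obtained from $C^{\bullet}$ exactly by deleting all composition cells $\mu^{n}_{p}$ and $\nu^{n}_{p}$. Hence every surviving generator of $G^{n}$ --- those giving rise to $\alpha^{n}_{0},\beta^{n}_{0},\alpha_{p},\beta_{p}$ and $\xi_{n-1}$ --- is \emph{linear}, i.e. has a single input, so that operadic composition of generators produces only unary operations. A free $\omega$-operad all of whose generators are linear has as algebras exactly the presheaves on the category freely generated by the globular arrows together with these unary operations, modulo the cosource--cotarget relations they satisfy; and by the explicit presentation given above this generated category is precisely $\mathbb{G}_{n}$. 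Matching the generators and the itemised relations of $\mathbb{G}_{n}$ against the cells and arities of $G^{n}$ then yields the asserted isomorphism $\underline{B}_{Id}^{n}$-$\mathbb{A}lg\simeq[\mathbb{G}^{op}_{n};\mathbb{S}et]$.

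The step I expect to be the main obstacle is this last matching, and in particular the verification that the absence of the composition cells $\mu^{n}_{p},\nu^{n}_{p}$ guarantees that no non-linear operation, and hence no genuinely operadic (non-presheaf) structure, can occur in a free $\underline{B}_{Id}^{n}$-algebra. Once linearity of all generators is secured, the relations defining $\mathbb{G}_{n}$ (those governing $\alpha_{p},\beta_{p}$ and $\xi_{n-1}$) correspond term by term to the globular arities of the generators of $G^{n}$, and the identification of the two categories becomes a routine comparison of their defining data --- which is why the proposition is announced as easy.
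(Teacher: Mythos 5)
Your proposal is correct and takes essentially the same approach the paper intends: the paper gives no written proof at all (the proposition is announced as ``easy,'' with the remark in Section~\ref{The_coglobular_complexes_of_the_graphical_omega_operads} that the combinatorial description of the categories $\mathbb{G}_{n}$ is what makes the proof straightforward), and your argument---$B^{n}_{Id}=M(G^{n})$ together with the free--forgetful adjunction, the observation that every generating cell of $G^{n}$ is unary once the $\mu^{n}_{p}$ and $\nu^{n}_{p}$ are deleted, and the term-by-term matching of generators and cosource--cotarget relations with the presentation of $\mathbb{G}_{n}$---is exactly that straightforward verification written out. The only detail worth adding is that the pointing cells $u_{m},v_{m}$ of $G^{n}$ are forced to act as identities (morphisms in $\TG_{p,c}$ preserve the pointing), so they contribute no extra operations to an algebra.
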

    Lets note $B^{\bullet}_{Id}$ this coglobular object in $\TC_{c}$. Its standard action is given by the following diagram in $\TC_1$

    \[\xymatrix{Coend(B^{\bullet}_{Id})
    \ar[rr]^{Coend(\mathbb{A}lg(.))}&&
  Coend(A_{Id}^{op})\ar[rr]^{Coend(Ob(.))}&&End(A_{0,Id}) }\]
 It is a standard action of the higher transformations specific to the basic $\omega$-graph structure. The monochromatic $\omega$-operad $Coend(B^{\bullet}_{Id})$ of coendomorphism plays a central role
  for $\omega$-graphs, and we call it the \textit{white operad}. Also it is straightforward that $B^{0}_{Id}$
  has the fractal property, because it is initial in the category $\TC_1$ of the category
  of $\omega$-operads, thus we have a unique morphism of $\omega$-operads
   \[\xymatrix{ B^{0}_{Id}\ar[rr]^{!_{Id}}&&Coend(B^{\bullet}_{Id}) }\]
 If we compose it with the standard action of the $\omega$-graphs

  \[\xymatrix{Coend(B^{\bullet}_{Id})
    \ar[rr]^{Coend(\mathbb{A}lg(.))}&&
  Coend(A_{Id}^{op})\ar[rr]^{Coend(Ob(.))}&&End(A_{0,Id}) }\]
 we obtain a morphism of $\omega$-operads

       \[\xymatrix{ B^{0}_{Id}\ar[rrr]^(.4){\mathfrak{G}}&&&End(A_{0,Id}) }\]
 which express an action of the $\omega$-operad  $B^{0}_{Id}$ of the
  $\omega$-graphs on the globular complex
  $B^{\bullet}_{Id}$-$\mathbb{A}lg(0)$ in $\mathbb{S}ET$
   of the $(n,\omega)$-graphs ($n\in\mathbb{N}$), and thus gives an
     $\omega$-graph structure on the $(n,\omega)$-graphs ($n\in\mathbb{N}$).

 \section{The coglobular complexes of the reflexive graphical \texorpdfstring{$\omega$}{omega}-operads}
    \label{The_coglobular_complexes_of_the_reflexive_graphical_omega_operads}
 By basing on the globe category $\mathbb{G}_{0}$ (see the section \ref{The_coglobular_complexes_of_the_graphical_omega_operads}),
 we build the reflexive globe category $\mathbb{G}_{0,\text{r}}$ as follow : For each $n\in\mathbb{N}$ we add in $\mathbb{G}_{0}$ the formal morphism
 $\xymatrix{\overline{n+1}\ar[rr]^{1^{n}_{n+1}}&&\bar{n}}$ such that
 $1^{n}_{n+1}\circ s^{n+1}_{n}=1^{n}_{n+1}\circ t^{n+1}_{n}=1_{\bar{n}}$. For each $0\leqslant p<n$ we denote
 $1^{p}_{n}:=1^{p}_{p+1}\circ 1^{p+1}_{p+2}\circ ... \circ 1^{n-1}_{n}$.

 For each each $n\geqslant 1$ we are going to build similar categories $\mathbb{G}_{n,\text{r}}$ in order that we will
 obtain a coglobular complex in $\C$

 \[\xymatrix{\mathbb{G}_{0,\text{r}}\ar[rr]<+2pt>^{\delta^{1}_{0}}\ar[rr]<-2pt>_{\kappa^{1}_{0}}
  &&\mathbb{G}_{1,\text{r}}\ar@/_2pc/[ll]_{i^{0}_{1}}\ar[rr]<+2pt>^{\delta^{1}_{2}}\ar[rr]<-2pt>_{\kappa^{2}_{1}}
  &&\mathbb{G}_{2,\text{r}}\ar@/_2pc/[ll]_{i^{1}_{2}}\ar@{.>}[r]<+2pt>^{}\ar@{.>}[r]<-2pt>_{}
  &\mathbb{G}_{n-1,\text{r}}\ar[rr]<+2pt>^{\delta^{n}_{n-1}}\ar[rr]<-2pt>_{\kappa^{n}_{n-1}}
  &&\mathbb{G}_{n,\text{r}}\ar@/_2pc/[ll]_{i^{n-1}_{n}}\ar@{.}[r]<+2pt>\ar@{.}[r]<-2pt>&}\]
  equipped with coreflexivity functors $i^{n}_{n+1}$.
 Each category $\mathbb{G}_{n,\text{r}}$ is called the reflexive $n$-globe category. It is build
 as the categories $\mathbb{G}_n$ $(n\geqslant 1)$ where we just replace $\mathbb{G}_0$ and $\mathbb{G}_0'$ by
 $\mathbb{G}_{0,r}$ and $\mathbb{G}_{0,r}'$.

For each $n\geqslant 1$, the cosources and the cotargets functors
     \[\xymatrix{\mathbb{G}_{n-1,\text{r}}\ar[rr]<+2pt>^{\delta^{n}_{n-1}}\ar[rr]<-2pt>_{\kappa^{n}_{n-1}}
  &&\mathbb{G}_{n,\text{r}}}\]
are build as those
 \[\xymatrix{\mathbb{G}_{n-1}\ar[rr]<+2pt>^{\delta^{n}_{n-1}}\ar[rr]<-2pt>_{\kappa^{n}_{n-1}}
  &&\mathbb{G}_{n}}\]
of the section \ref{The_coglobular_complexes_of_the_graphical_omega_operads}, where in addition $\delta^{1}_{0}$ sends
for all $p\geqslant 0$, the reflexivity morphism $1^{p}_{p+1}$ to the reflexivity morphism $1^{p}_{p+1}$, and
$\kappa^{0}_{1}$ sends the reflexivity morphism $1^{p}_{p+1}$ to the reflexivity morphism $1'^{p}_{p+1}$.
Also, if $n\geqslant 2$, $\delta^{n}_{n+1}$ and $\kappa^{n}_{n+1}$ send for all $p\geqslant 0$, the reflexivity morphism $1^{p}_{p+1}$
to the reflexivity morphism $1^{p}_{p+1}$, and the reflexivity morphism $1'^{p}_{p+1}$ to the reflexivity morphism $1'^{p}_{p+1}$.
It is trivial to see that these functors $\delta^{n}_{n-1}$ and $\kappa^{n}_{n-1}$ verify the cosourse and cotarget conditions.

For each $n\geqslant 1$, the coreflexivity functor
     \[\xymatrix{\mathbb{G}_{n,\text{r}}\ar[rr]^{i^{n-1}_{n}}&&\mathbb{G}_{n-1,\text{r}}}\]
 is built as follow: the coreflexivity functor $\iota^{0}_{1}$ sends, for all $q\geqslant 0$, the object $\bar{q}$ to $\bar{q}$,
  the object $\bar{q}'$ to $\bar{q}$, the cosource morphisms $s^{q+1}_{q}$ and $s'^{q+1}_{q}$ to $s^{q+1}_{q}$, the
  cotarget morphisms $t^{q+1}_{q}$ and $t'^{q+1}_{q}$ to $t^{q+1}_{q}$, the functor morphisms $\alpha^{q}_{0}$ to $1_{\bar{q}}$.
Also the coreflexivity functor $i^{1}_{2}$ sends, for all $q\geqslant 0$, the object $\bar{q}$ to $\bar{q}$,
  the object $\bar{q}'$ to $\bar{q}'$, the cosource morphism $s^{q+1}_{q}$ to the cosource morphism $s^{q+1}_{q}$,
  the cosource morphism $s'^{q+1}_{q}$ to the cosource morphism $s'^{q+1}_{q}$,
  the cotarget morphism $t^{q+1}_{q}$ to the cotarget morphism $t^{q+1}_{q}$,
  the cotarget morphism $t'^{q+1}_{q}$ to the cotarget morphism $t'^{q+1}_{q}$,
  the functor morphisms $\alpha^{q}_{0}$ to the functor morphisms $\alpha^{q}_{0}$,
  the functor morphisms $\beta^{q}_{0}$ to the functor morphisms $\beta^{q}_{0}$, the
  natural transformation morphism $\xi_{1}$ to $\alpha^{0}_{0}\circ 1'^{0}_{1}$.

  Also for each $n\geqslant 3$, the coreflexivity functor $i^{n-1}_{n}$ sends, for all $q\geqslant 0$, the object $\bar{q}$ to $\bar{q}$,
  the object $\bar{q}'$ to $\bar{q}'$, the cosource morphism $s^{q+1}_{q}$ to the cosource morphism $s^{q+1}_{q}$,
  the cosource morphism $s'^{q+1}_{q}$ to the cosource morphism $s'^{q+1}_{q}$,
  the cotarget morphism $t^{q+1}_{q}$ to the cotarget morphism $t^{q+1}_{q}$,
  the cotarget morphism $t'^{q+1}_{q}$ to the cotarget morphism $t'^{q+1}_{q}$,
  the functor morphisms $\alpha^{q}_{0}$ to the functor morphisms $\alpha^{q}_{0}$,
  the functor morphisms $\beta^{q}_{0}$ to the functor morphisms $\beta^{q}_{0}$.
  Also if $0\leqslant p\leqslant n-3$, it sends the $p$-transformation $\alpha_{p}$ to
  the $p$-transformation $\alpha_{p}$, the $p$-transformation $\beta_{p}$ to
  the $p$-transformation $\beta_{p}$\footnote{By convention we put $\alpha_0=\alpha^{0}_{0}$ and $\beta_0=\beta^{0}_{0}$.
  In fact this convention is natural because in our point of view of $n$-transformations, $1$-transformations
  are the usual natural transformations, and a $0$-transformation must be seen a the underlying function
  $F_0$ acting on the $0$-cells of a functor $F$.},
  the $(n-2)$-transformation $\alpha_{n-2}$ and $\beta_{n-2}$ to the $(n-2)$-transformation $\xi_{n-2}$,
  and finally the $(n-1)$-transformation $\xi_{n-1}$ to $\xi_{n-2}\circ 1'^{n-2}_{n-1}$.

  With this construction it is not difficult to show that functors $i^{n-1}_{n}$ ($n\geqslant 1$) verify the coreflexivity identities
     \[i^{n-1}_{n}\circ \delta^{n}_{n-1}=1_{\mathbb{G}^{\text{r}}_{n-1}}=i^{n-1}_{n}\circ \kappa^{n}_{n-1}\]
 When we applied the contravariant functor $[-;\mathbb{S}et]$
   to the following coglobular complex in $\C$

   \[\xymatrix{\mathbb{G}^{op}_{0,\text{r}}\ar[rr]<+2pt>^{\delta^{1}_{0}}\ar[rr]<-2pt>_{\kappa^{1}_{0}}
  &&\mathbb{G}^{op}_{1,\text{r}}\ar@/_2pc/[ll]_{i^{0}_{1}}\ar@/_2pc/[ll]_{i}\ar[rr]<+2pt>^{\delta^{1}_{2}}\ar[rr]<-2pt>_{\kappa^{2}_{1}}
  &&\mathbb{G}^{op}_{2,\text{r}}\ar@/_2pc/[ll]_{i^{1}_{2}}\ar@{.>}[r]<+2pt>^{}\ar@{.>}[r]<-2pt>_{}
  &\mathbb{G}^{op}_{n-1,\text{r}}\ar[rr]<+2pt>^{\delta^{n}_{n-1}}\ar[rr]<-2pt>_{\kappa^{n}_{n-1}}
  &&\mathbb{G}^{op}_{n,\text{r}}\ar@/_2pc/[ll]_{i^{n-1}_{n}}\ar@{.}[r]<+2pt>\ar@{.}[r]<-2pt>&}\]
  it is easy to see that we obtain the reflexive $\omega$-graph of the reflexive $\omega$-graphs

  \[\xymatrix{\ar@{.>}[r]<+2pt>^{}\ar@{.>}[r]<-2pt>_{}
  &[\mathbb{G}^{op}_{n,\text{r}};\mathbb{S}et](0)\ar[r]<+2pt>^{\sigma^{n}_{n-1}}\ar[r]<-2pt>_{\beta^{n}_{n-1}}
  &[\mathbb{G}^{op}_{n-1,\text{r}};\mathbb{S}et](0)\ar@/_2pc/[l]_{\iota^{n-1}_{n}}\ar@{.>}[r]<+2pt>^{}\ar@{.>}[r]<-2pt>_{}
  &[\mathbb{G}^{op}_{1,\text{r}};\mathbb{S}et](0)\ar[r]<+2pt>^{\sigma^{1}_{0}}\ar[r]<-2pt>_{\beta^{1}_{0}}
  &[\mathbb{G}^{op}_{0,\text{r}};\mathbb{S}et](0)\ar@/_2pc/[l]_{\iota^{0}_{1}} }\]
An object of the category of presheaves $[\mathbb{G}^{op}_{n};\mathbb{S}et]$ is called a reflexive $(n,\omega)$-graphs. For instance, if $n\geqslant 3$,
the reflexivity functor $\iota^{n-1}_{n}$ can be described as follow : If $\xymatrix{X:\mathbb{G}^{op}_{n-1}\ar[r]&\mathbb{S}et}$ is an
 $(n-1,\omega)$-graph, thus $\xymatrix{X(\xi_{n-2}):X(\bar{0})\ar[r]&X(\overline{n-2}')}$ is its underlying $(n-2)$-transformation, and
  $\xymatrix{\iota^{n-1}_{n}(X)(\xi_{n-1}):X(\bar{0})\ar[r]&X(\overline{n-1}')}$ is the $(n-1)$-transformation defined by :
  $\iota^{n-1}_{n}(X)(\xi_{n-1})=X(1'^{n-2}_{n-1})\circ X(\xi_{n-2})$.

Now consider the coglobular complex $G^{\bullet}$ in $\TG_{p,c}$ as in the \textit{Introduction}.
 If we apply the free functor $Id_u: \xymatrix{\TG_{p,c}\ar[r]^{}&Id_u\TC_{c}}$ (see \ref{short_intro})
  to it we obtain a coglobular complex in $\TC_c$

     \[\xymatrix{B^{0}_{Id_{u}}\ar[rr]<+2pt>^{\delta^{1}_{0}}\ar[rr]<-2pt>_{\kappa^{1}_{0}}
  &&B^{1}_{Id_{u}}\ar[rr]<+2pt>^{\delta^{1}_{2}}\ar[rr]<-2pt>_{\kappa^{2}_{1}}
  &&B^{2}_{Id_{u}}\ar@{.>}[r]<+2pt>^{}\ar@{.>}[r]<-2pt>_{}
  &B^{n-1}_{Id_{u}}\ar[rr]<+2pt>^{\delta^{n}_{n-1}}\ar[rr]<-2pt>_{\kappa^{n}_{n-1}}
  &&B^{n}_{Id_{u}}\ar@{.}[r]<+2pt>\ar@{.}[r]<-2pt>&}\]
   which produces the following globular complex in $\mathbb{C}AT$

    \[\xymatrix{\ar@{.>}[r]<+2pt>^{}\ar@{.>}[r]<-2pt>_{}
  &\underline{B}_{Id_{u}}^{n}-\mathbb{A}lg\ar[r]<+2pt>^{\sigma^{n}_{n-1}}\ar[r]<-2pt>_{\beta^{n}_{n-1}}
  &\underline{B}_{Id_{u}}^{n-1}-\mathbb{A}lg\ar@{.>}[r]<+2pt>^{}\ar@{.>}[r]<-2pt>_{}
  &\underline{B}_{Id_{u}}^{1}-\mathbb{A}lg\ar[r]<+2pt>^{\sigma^{1}_{0}}\ar[r]<-2pt>_{\beta^{1}_{0}}
  &\underline{B}_{Id_{u}}^{0}-\mathbb{A}lg }\]
and we have the easy proposition
\begin{proposition}
  The category $\underline{B}_{Id_{u}}^{0}$-$\mathbb{A}lg$ is the category $[\mathbb{G}^{op}_{0,\text{r}};\mathbb{S}et]$ of the reflexive $\omega$-graphs,
   $\underline{B}_{Id_{u}}^{1}$-$\mathbb{A}lg$ is the category $[\mathbb{G}^{op}_{1,\text{r}};\mathbb{S}et]$ of the reflexive $(1,\omega)$-graphs, and for
   each integer $n\geqslant 2$,
   $\underline{B}_{Id_{u}}^{n}$-$\mathbb{A}lg$ is the category $[\mathbb{G}^{op}_{n,\text{r}};\mathbb{S}et]$ of the reflexive $(n,\omega)$-graphs.
\end{proposition}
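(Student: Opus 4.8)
The plan is to run the proof of \cref{proposition-graph} essentially verbatim, carrying along the reflexivity data at every stage. The reflexive globe categories $\mathbb{G}_{n,\text{r}}$ were constructed precisely so that each $\mathbb{G}_{n,\text{r}}$ is obtained from $\mathbb{G}_{n}$ by adjoining the two families of reflexivity morphisms $1^{p}_{p+1}$ and $1'^{p}_{p+1}$ (subject to $1^{p}_{p+1}\circ s^{p+1}_{p}=1^{p}_{p+1}\circ t^{p+1}_{p}=1_{\bar{p}}$ and their primed analogues), with no new relations touching the higher cells $\alpha^{q}_{0},\beta^{q}_{0},\alpha_{p},\beta_{p},\xi_{n-1}$. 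On the operadic side, replacing the free functor $M$ by the free functor $Id_u$ amounts to freely adjoining \emph{contractible units}. The entire proof thus reduces to the following slogan: under the equivalence of \cref{proposition-graph}, adjoining contractible units to the graphical $\omega$-operads corresponds exactly to adjoining the reflexivity morphisms to the globe categories.

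First I would settle the base case $n=0$. By \cref{proposition-graph} we have $\underline{B}^{0}_{Id}$-$\mathbb{A}lg=[\mathbb{G}^{op}_{0};\mathbb{S}et]$. The operad $B^{0}_{Id_{u}}$ is obtained from $B^{0}_{Id}$ by the contractible-units construction (compare the section $3.1$ of \cite{camell:coend}), whose effect on algebras is to equip each $\omega$-graph with a degeneracy $1^{n}_{n+1}$ for every $n$, satisfying exactly the unit relations imposed in $\mathbb{G}_{0,\text{r}}$. This is precisely the datum of a presheaf on $\mathbb{G}^{op}_{0,\text{r}}$, so $\underline{B}^{0}_{Id_{u}}$-$\mathbb{A}lg=[\mathbb{G}^{op}_{0,\text{r}};\mathbb{S}et]$; the cases $n=1$ and $n\geqslant 2$ follow the same template.

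For general $n$ I would make the correspondence functorial. The inclusion $\mathbb{G}_{n}\hookrightarrow\mathbb{G}_{n,\text{r}}$ induces a forgetful functor $[\mathbb{G}^{op}_{n,\text{r}};\mathbb{S}et]\to[\mathbb{G}^{op}_{n};\mathbb{S}et]$, while the morphism of $\omega$-operads $B^{n}_{Id}\to B^{n}_{Id_{u}}$ induces the corresponding forgetful functor on algebras. Since $\mathbb{G}_{n,\text{r}}$ adjoins degeneracies only to the two globular rows of $\mathbb{G}_{n}$ and leaves the higher-transformation cells untouched, a $\underline{B}^{n}_{Id_{u}}$-algebra is exactly a $\underline{B}^{n}_{Id}$-algebra (a presheaf on $\mathbb{G}^{op}_{n}$ by \cref{proposition-graph}) together with compatible degeneracy operations on each of its two components, i.e. a presheaf on $\mathbb{G}^{op}_{n,\text{r}}$. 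Matching the two forgetful functors degreewise then forces the identification.

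The main obstacle I expect is checking compatibility of the freely adjoined units with the higher cells $\xi_{n-1}$ and $\alpha_{p},\beta_{p}$. Concretely, one must verify that the coreflexivity functors $i^{n-1}_{n}$ built above---in particular the prescriptions $\xi_{1}\mapsto\alpha^{0}_{0}\circ 1'^{0}_{1}$ and $\xi_{n-1}\mapsto\xi_{n-2}\circ 1'^{n-2}_{n-1}$---are precisely the ones induced on presheaves by the contractible-unit structure of $B^{n}_{Id_{u}}$, so that the reflexivity operators $\iota^{n-1}_{n}$ on algebras coincide with the operators $\iota^{n-1}_{n}$ on the presheaf categories. Once the coreflexivity identities $i^{n-1}_{n}\circ\delta^{n}_{n-1}=1_{\mathbb{G}^{\text{r}}_{n-1}}=i^{n-1}_{n}\circ\kappa^{n}_{n-1}$ are matched with the unit axioms of the operad, the identification is forced in every degree and the proposition follows.
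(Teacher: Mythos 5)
The paper itself offers no proof of this proposition --- like \cref{proposition-graph} it is simply announced as ``the easy proposition'' --- so there is no official argument to compare yours against; judged on its own merits, your sketch does identify the intended mechanism (contraction cells for the operadic units $\leftrightarrow$ the reflexivity morphisms $1^{p}_{p+1}$, $1'^{p}_{p+1}$), and your observation that $\mathbb{G}_{n,\text{r}}$ imposes no relations mixing the reflexivities with the cells $\alpha^{q}_{0},\beta^{q}_{0},\alpha_{p},\beta_{p},\xi_{n-1}$ is the correct explanation of why no compatibility between degeneracies and the morphism/transformation cells appears on the algebra side.

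The genuine gap is that your ``slogan'' is not a reduction of the proposition: it \emph{is} the proposition, and it is never discharged. All of the content sits inside the definition of the free functor $Id_u:\TG_{p,c}\to Id_u\TC_{c}$ of \cite{camell:coend}, which you (like the paper) never open. Two things need verification. First, your factorisation claim that $B^{n}_{Id_{u}}$ is obtained from $B^{n}_{Id}$ by a further free ``adjoin contractible units'' construction; this requires the forgetful functor $Id_u\TC_{c}\to\TG_{p,c}$ to factor through $\TC_{c}$ compatibly with the adjunctions. Second, and critically, that this construction adjoins, \emph{freely as an $\omega$-operad}, exactly one contraction cell $[u_{m};u_{m}]^{m}_{m+1}$ per unit (per colour and per dimension) and nothing else. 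If instead ``contractible units'' in the sense of \cite{camell:coend} demands contractions for all parallel pairs of degenerate cells --- e.g.\ a cell contracting $[u_{m};u_{m}]^{m}_{m+1}$ with itself, and so on iteratively --- then the free construction adds infinitely many generating cells, and an algebra of $\underline{B}^{n}_{Id_{u}}$ would carry extra operations (for instance an operation $X_{m}\to X_{m+2}$ whose source and target are the degeneracy of $x$, but which is not forced to equal the iterated degeneracy) beyond the reflexive structure; the claimed identification with $[\mathbb{G}^{op}_{n,\text{r}};\mathbb{S}et]$ would then fail to be an isomorphism of categories. A complete proof must therefore exhibit the generators-and-relations description of $\underline{B}^{n}_{Id_{u}}$ and match it cell-by-cell with the presentation of $\mathbb{G}_{n,\text{r}}$; that is precisely the step your sketch assumes. (A minor point: your final paragraph about matching the coreflexivity functors $i^{n-1}_{n}$ with the operadic unit structure concerns the reflexive-graph structure \emph{across} degrees, which the paper needs in the discussion after the proposition, but it is not part of the proposition's purely degreewise statement.)
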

 Lets note $B^{\bullet}_{Id_{u}}$ this coglobular object in $\TC_{c}$.
   According to the section \ref{short_intro},
    we obtain the following diagram in $\TC_1$

   \[\xymatrix{Coend(B^{\bullet}_{Id_{u}})\ar[rr]^{Coend(\mathbb{A}lg(.))}&&
 Coend(A_{Id_{u}}^{op})\ar[rr]^{Coend(Ob(.))}&&Coend(A_{0,Id_{u}}^{op}) }\]
that we call the standard action of the reflexive $\omega$-graphs, thus which is a specific
 standard action.  The monochromatic $\omega$-operad $Coend(B^{\bullet}_{Id_{u}})$ of coendomorphism plays a central role
   for reflexive $\omega$-graphs, and we call it the \textit{blue operad}. Also we have the following proposition
\begin{proposition}
\label{proposition-fractality-BRG}
$B^{0}_{Id_{u}}$ has the fractal property.
\end{proposition}
\begin{proof}
  The units of the $\omega$-operad  $Coend(B^{\bullet}_{Id_{u}})$ are given by identity morphisms $\xymatrix{B^{n}_{Id_{u}}\ar[rr]^{1_{B^{n}_{Id_{u}}}}&&B^{n}_{Id_{u}}}$. We are going to exhibit a morphism of $\omega$-operads
   \[\xymatrix{B^{n+1}_{Id_{u}}\ar[rrrr]^{[1_{B^{n}_{Id_{u}}};1_{B^{n}_{Id_{u}}}]^{n}_{n+1}}&&&&B^{n}_{Id_{u}}}\]
which is the contractibility of the unit $1_{B^{n}_{Id_{u}}}$ with itself.

First consider the morphism of $\xymatrix{G^{n+1}\ar[rr]^{c^{n}_{n+1}}&&B^{n}_{Id_{u}}}$ of $\TG_c$, which sends $u_{m}$ to $u_{m}$,
$v_{m}$ to $v_{m}$, $\alpha^{m}_{0}$ to $\alpha^{m}_{0}$, $\beta^{m}_{0}$ to $\beta^{m}_{0}$, $\alpha_{p}$ to $\alpha_{p}$,
$\beta_{p}$ to $\beta_{p}$, $\alpha_{n}$ to $\xi_{n}$, $\beta_{n}$ to $\xi_{n}$, and $\xi_{n+1}$ to $\gamma([u_{n};u_{n}]^{n}_{n+1};1_{\xi_{n}})$. This
map $c^{n}_{n+1}$ equipped $B^{n}_{Id_{u}}$ with an operation system of the type $G^{n+1}$ and $B^{n}_{Id_{u}}$ has contractible units, so
by the universality of the map $\eta_{n+1}$, we get a unique morphism of $\omega$-operads $[1_{B^{n}_{Id_{u}}};1_{B^{n}_{Id_{u}}}]^{n}_{n+1}$

  \[\xymatrix{B^{n+1}_{Id_{u}}\ar@{.>}[rrrr]^{[1_{B^{n}_{Id_{u}}};1_{B^{n}_{Id_{u}}}]^{n}_{n+1}}&&&&B^{n}_{Id_{u}}\\
  G^{n+1}\ar[u]^{\eta_{n+1}}\ar[rrrru]_{c^{n}_{n+1}}}\]
This $(n+1)$-cell $[1_{B^{n}_{Id_{u}}};1_{B^{n}_{Id_{u}}}]^{n}_{n+1}$ has arity the degenerate tree $1^{n}_{n+1}([n])$.
Now we just need to prove that the following diagram commute serially, which shows that source and target of $[1_{B^{n}_{Id_{u}}};1_{B^{n}_{Id_{u}}}]^{n}_{n+1}$
is the unit $1_{B^{n}_{Id_{u}}}$

   \[\xymatrix{B^{n+1}_{Id_{u}}\ar[rrrrd]^{[1_{B^{n}_{Id_{u}}};1_{B^{n}_{Id_{u}}}]^{n}_{n+1}}\\
   B^{n}_{Id_{u}}\ar[u]<+2pt>^{\delta^{n+1}_{n}}\ar[u]<-2pt>_{\kappa^{n+1}_{n}}
   \ar[rrrr]_{1_{B^{n}_{Id_{u}}}}&&&&B^{n}_{Id_{u}}}\]
But we have the following diagram which, on the left side commute serially, and on the right side commute

   \[\xymatrix{B^{n}_{Id_{u}}\ar[rr]<+2pt>^{\delta^{n+1}_{n}}\ar[rr]<-2pt>_{\kappa^{n+1}_{n}}&&B^{n+1}_{Id_{u}}
 \ar[rrr]^{[1_{B^{n}_{Id_{u}}};1_{B^{n}_{Id_{u}}}]^{n}_{n+1}}&&&B^{n}_{Id_{u}}\\
 G^{n}\ar[u]^{\eta_{n}}\ar[rr]<+2pt>^{\delta^{n+1}_{n}}\ar[rr]<-2pt>_{\kappa^{n+1}_{n}}&&G^{n+1}\ar[u]_{\eta_{n+1}}\ar[rrru]_{c^{n}_{n+1}}}\]
The morphism $c^{n}_{n+1}$ is a morphism of $\TG_{p,c}$, and also the morphisms $\delta^{n+1}_{n}$ and $\kappa^{n+1}_{n}$ on the
bottom of this diagram. Their combinatorial descriptions show easily that we have the equalities
$c^{n}_{n+1}\circ\delta^{n+1}_{n}=c^{n}_{n+1}\circ\kappa^{n+1}_{n}=\eta_{n}$. So we have the equalities
$[1_{B^{n}_{Id_{u}}};1_{B^{n}_{Id_{u}}}]^{n}_{n+1}\circ\delta^{n+1}_{n}=[1_{B^{n}_{Id_{u}}};1_{B^{n}_{Id_{u}}}]^{n}_{n+1}\circ\kappa^{n+1}_{n}=1_{B^{n}_{Id_{u}}}$.
It show that the $\omega$-operad  $Coend(B^{\bullet}_{Id_{u}})$ has contractible units, and thus we have a unique morphism of $\omega$-operads
  \[\xymatrix{B^{0}_{Id_{u}}\ar[rr]^{!_{Id_{u}}}&&Coend(B^{\bullet}_{Id_{u}}) }\]
which express the fractality of $B^{0}_{Id_{u}}$.
\end{proof}
 If we compose the morphism $!_{Id_{u}}$ with the standard action of the reflexive $\omega$-graphs
 \[\xymatrix{Coend(B^{\bullet}_{Id_{u}})
   \ar[rr]^{Coend(\mathbb{A}lg(.))}&&
 Coend(A_{Id_{u}}^{op})\ar[rr]^{Coend(Ob(.))}&&Coend(A_{0,Id_{u}}){op} }\]
we obtain a morphism of $\omega$-operads

      \[\xymatrix{ B^{0}_{Id_{u}}\ar[rrr]^(.4){\mathfrak{G}_r}&&& End(A_{0,Id_{u}}) }\]
which express an action of the $\omega$-operad  $B^{0}_{Id_{u}}$ of the
 reflexive $\omega$-graphs on the globular complex
 $B^{\bullet}_{Id_{u}}$-$\mathbb{A}lg(0)$ in $\mathbb{S}ET$
  of the reflexive $(n,\omega)$-graphs ($n\in\mathbb{N}$), and thus gives a reflexive
    $\omega$-graph structure on the reflexive $(n,\omega)$-graphs ($n\in\mathbb{N}$).

    \vspace{1cm}

\vspace{1cm}

  \bigbreak{}
  \begin{minipage}{1.0\linewidth}
    Camell \textsc{Kachour}\\
    83 Boulevard du Temple, 93390, Clichy-sous-Bois, France\\
    Phone: 0033143512807\\
    Email:\href{mailto:camell.kachour@gmail.com}{\url{camell.kachour@gmail.com}}
  \end{minipage}

\end{document}